\documentclass{amsart}
\usepackage[utf8]{inputenc}
\usepackage{amsmath}
\usepackage{amssymb}
\usepackage{enumitem}
\usepackage{mathrsfs}
\usepackage{mathtools}
\usepackage{amsrefs}
\input xy
\xyoption{all}
\usepackage{stmaryrd} 
\usepackage[colorlinks]{hyperref}

\theoremstyle{plain} 

\newtheorem{thm}{Theorem}

\newtheorem{lemma}[thm]{Lemma}

\newtheorem{coro}[thm]{Corollary}

\newtheorem{ques}[thm]{Question}

\theoremstyle{definition}
\newtheorem{remark}[thm]{Remark}

\newcommand{\Z}{\mathbb{Z}}
\newcommand{\Q}{\mathbb{Q}}

\newcommand{\EL}{\mathrm{EL}}
\newcommand{\SL}{\mathrm{SL}}

\newcommand{\Alt}{\mathrm{Alt}}

\newcommand{\Id}{\mathsf{Id}}

\newcommand{\into}{\hookrightarrow}

\begin{document}

\author[I.~Chatterji]{Indira Chatterji}
\address{Universit\'e C\^ote d'Azur, Nice, France}
\thanks{I.C. is supported in part by ANR GALS and ANR GOFR}

\author[M.~Kassabov]{Martin Kassabov}
\address{Cornell University, USA}
\thanks{M.K. is supported in part by the Simon's Foundation grant 713557 and NSF DMS 2319371 }

\title[Property (T) and fixed point properties]{Examples of finitely presented groups with strong fixed point properties and property (T)}
\date{\today}
\begin{abstract}
    We generalize the main result in~\cite{ChK} and construct a finitely presented group with property (T) which can not act on on reasonable spaces. Such group is constructed using an generalization of Hall embedding theorem, where property (T) is added at the expense of weakening the simplicity requirement.
\end{abstract}
\maketitle

This note extends the construction in~\cite{ChK} and proves the existence of groups with strong fixed point properties, which are also finitely presented and have property (T).
\begin{thm}\label{gen}
For any non-trivial element $g\in \SL_\infty(\Z)$ there is an embedding of the group $\SL_\infty(\Z)$ into a finitely presented property (T) group $\Gamma$ that is normally generated by $g$.
\end{thm}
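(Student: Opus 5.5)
The plan is to follow the strategy of~\cite{ChK}: realize $\SL_\infty(\Z)$ inside a finitely presented group with property (T) that is built from Steinberg-type relations over a suitable ring, and then enlarge it so that $g$ normally generates. I will work throughout with the elementary subgroup. $\SL_\infty(\Z)$ is the direct limit of the $\SL_n(\Z)=\EL_n(\Z)$ for $n\ge 3$, so it equals $\EL_\infty(\Z)$.

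\textbf{Step 1: a (T) group containing $\SL_\infty(\Z)$.} Take a finitely generated ring $R$ such that $\EL_n(R)$ is finitely presented with property (T) for some fixed $n\ge 3$, and such that $M_\infty(\Z)$ embeds into $M_n(R)$ in a way compatible with elementary matrices. A natural choice is an $R$ containing a ``shift'' algebra, for instance the ring generated by $\Z$ together with the unilateral shift operators on $\Z^{(\infty)}$. These are the same devices as in~\cite{ChK}, which uses Thompson-like or Leavitt-like algebras $R$ with $R\cong R^2$ as modules. Then $M_\infty(\Z)$ sits as a nonunital subring of $R$, and $\EL_\infty(\Z)$ embeds into $\EL_3(R)$. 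Property (T) for $\EL_3(R)$, with $R$ finitely generated, follows from the Ershov--Jaikin-Zapirain theorem. Finite presentability comes from presenting $\EL_3(R)$ by the Steinberg relations over a finitely presented $R$, plus finitely many extra relations. This is where I expect to reuse the machinery of~\cite{ChK}.

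\textbf{Step 2: forcing normal generation by $g$.} The group from Step 1 is generally not normally generated by $g$, since congruence quotients exist. So I will modify $R$ by adjoining finitely many relations so that $R$ becomes ``simple enough''. Concretely, the ideal of $R$ generated by the entries of $g-1$ should be all of $R$. Here $g\ne 1$ is a finite matrix in $\SL_N(\Z)$, so $g-1$ has a nonzero entry $m\in\Z\subset R$. Commutator formulas in $\EL_n(R)$ then show that the normal closure of $g$ contains $E_{ij}(r)$ for all $r$ in the two-sided ideal generated by $m$ (Suslin/Vaserstein-type normal structure). If we arrange that $m$ is invertible in $R$, say by working over $\Z[1/m]$ in the appropriate sense, then this closure is all of $\EL_n(R)$. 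However, inverting $m$ could destroy the embedding of $\SL_\infty(\Z)$ only if it kills torsion, and since $\Z\to\Z[1/m]$ is injective the embedding survives. Alternatively, I would take the free product of the Step 1 group with a finitely presented (T) group and add relations, as in a Hall-type embedding, trading simplicity for normal generation.

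\textbf{Main obstacle.} The hard step is making the final group simultaneously finitely presented, (T), and normally generated by $g$, while keeping $\SL_\infty(\Z)$ injective. Adding relations may collapse the embedding. I would first try to choose $R$ as a finitely presented simple (or $m$-invertible) ring containing the infinite matrices over $\Z$, for instance a Leavitt-type algebra over $\Z[1/m]$. I would then check injectivity by exhibiting a faithful representation of $R$ on a free $\Z[1/m]$-module of countable rank, on which $M_\infty(\Z)$ acts in the standard way.
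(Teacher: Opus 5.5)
Your proposal has a genuine gap exactly where you place the main obstacle: making the group finitely presented while keeping $\SL_\infty(\Z)$ embedded.

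The claim in Step 1 that $\EL_3(R)$ is finitely presented ``by the Steinberg relations over a finitely presented $R$, plus finitely many extra relations'' is unjustified. We have $\EL_3(R)=\mathrm{St}_3(R)/K_2(3,R)$, and nothing forces $K_2(3,R)$ to be finitely normally generated. For instance, Behr showed that $\SL_3(\mathbb{F}_q[t])=\EL_3(\mathbb{F}_q[t])$ is not finitely presented, although $\mathbb{F}_q[t]$ is a finitely presented ring. If you pass instead to a finitely presented cover such as a Steinberg group, you still have to show that $\SL_\infty(\Z)$, which is not even finitely generated, lifts injectively. Nothing in the proposal does this.

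The missing idea is the paper's:
\begin{enumerate}[label=(\roman*)]
\item By Shalom's theorem, the finitely generated (T) group $\tilde\Gamma=\EL_3(L)$ is a quotient of a finitely presented (T) group $\Gamma_1$.
\item On $\Gamma_1$ you impose the finitely many relators of a \emph{finitely presented} group $\bar G\supseteq\SL_\infty(\Z)$ that already injects into $\tilde\Gamma$. Injectivity into the new quotient is automatic, because the composite to $\tilde\Gamma$ is injective.
\item You then add finitely many relators expressing each generator as a product of conjugates of $g$.
\end{enumerate}
Since (T) passes to quotients, the ring never needs to be finitely presented. This is why the paper builds $L$ around $\Z[\tilde G]$ with $\bar G\subseteq[\tilde G,\tilde G]$, so that all of $\bar G$, not merely $\SL_\infty(\Z)$, lies in $\EL_3(L)$. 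It is also why the paper can afford a Hall-type ring embedding, adjoining $P,Q\in\mathrm{End}(W)$ with $P(1-g)Q=1$: that yields a finitely generated ring with no control over its presentation.

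Step 2 also contains an error. Under your embedding, $g$ becomes $\mathrm{diag}(\hat g,1,1)$ with $\hat g=1+\sum_{k,l}(g-1)_{kl}e_{kl}$. So its normal closure contains $E_{ij}(x)$ for $x$ in the two-sided ideal generated by $\hat g-1$. That element is a finitary matrix, not the scalar $m\cdot 1_R$.

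For the shift ring you suggest, the image of $\Z\langle x,y\mid yx=1\rangle$, the finitary matrices form a proper ideal with quotient $\Z[t^{\pm1}]$. So even after inverting $m$, the element $g$ maps to the identity in $\EL_3(\Z[1/m][t^{\pm1}])$, and its normal closure is proper.

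The argument does work for the Leavitt algebra $\Lambda$ over $\Z[1/m]$, generated by $x_1,x_2,y_1,y_2$ with $y_ix_j=\delta_{ij}$ and $x_1y_1+x_2y_2=1$. There the matrix units $e_{kl}=x_1^kx_2y_2y_1^l$ satisfy $(y_2y_1^k)\,e_{kl}\,(x_1^lx_2)=1$. Hence the ideal generated by $\hat g-1$ contains the unit $m$ and is all of $\Lambda$. This repairs normal generation only; finite presentability remains open in your approach.
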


As explained in~\cite{ChK}, using the fact that the group $\SL_\infty(\Z)$ has only a few interesting actions, one ca deduce the following corollary:
\begin{coro}\label{noaction}
    There exists a finitely presented group with property (T) which does not act non-trivially on any ``reasonable'' space.
\end{coro}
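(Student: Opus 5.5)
The statement to prove is Corollary~\ref{noaction}, which should follow from Theorem~\ref{gen} together with the rigidity of $\SL_\infty(\Z)$. I take ``reasonable space'' to mean, as in~\cite{ChK}, any space on which every action of $\SL_\infty(\Z)$ is trivial, or at least one on which the chosen element below is forced to act trivially. Examples are finite-dimensional CAT(0) spaces, finite-dimensional manifolds where one asks for actions of finite groups to be trivial, Gromov hyperbolic spaces with the relevant non-elementarity assumption, and Hilbert spaces up to fixed points.

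The plan has three steps.

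\textbf{Choose a good element.} Pick a non-trivial element $g\in \SL_\infty(\Z)$ that is as ``degenerate'' as possible, so that for every action of $\SL_\infty(\Z)$ on a reasonable space $X$, the element $g$ acts trivially. A natural choice is an element of finite order, for instance $-1$ on a $2\times 2$ block, or an elementary matrix. The known rigidity inputs are:
\begin{itemize}[label=$\bullet$]
\item Every homomorphism from $\SL_n(\Z)$, $n\ge 3$, to a group acting nicely has finite image, via Margulis normal subgroup theorem style arguments or bounded generation.
\item $\SL_\infty(\Z)$ is the union of the $\SL_n(\Z)$ and has no non-trivial finite quotients beyond congruence ones; in fact its congruence kernels force everything to die.
\end{itemize}
Concretely, the argument of~\cite{ChK} shows that any action of $\SL_\infty(\Z)$ on such $X$ has a global fixed point, or factors through a quotient in which a specified $g$ maps trivially. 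I will import exactly this statement.

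\textbf{Apply Theorem~\ref{gen}.} This gives a finitely presented property (T) group $\Gamma\supseteq \SL_\infty(\Z)$ normally generated by $g$.

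\textbf{Conclude.} Given an action of $\Gamma$ on a reasonable $X$, restrict it to $\SL_\infty(\Z)$. By the first step, $g$ lies in the kernel $K$ of the action. Since $K$ is normal in $\Gamma$ and $g$ normally generates $\Gamma$, we get $K=\Gamma$, so the action is trivial.

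The main obstacle is the first step: making precise which spaces count as ``reasonable'' and verifying that a single fixed $g$ acts trivially in every such action. ``Has a fixed point'' is not enough; we need the kernel to contain $g$. I would handle this class by class, in each case choosing $g$ in the commutator-rich part, e.g. an elementary matrix $e_{12}(1)$. For isometric actions on finite-dimensional CAT(0) or hyperbolic spaces, I would use that the Heisenberg-type relations in $\SL_\infty(\Z)$ make $e_{12}(1)$ a distorted element, hence elliptic or trivial. I would then use the simplicity of $\SL_\infty(\Z)$ modulo congruence subgroups to upgrade this to $e_{12}(1)$ acting trivially. If that upgrade fails for some class, I would restrict the corollary to the classes treated in~\cite{ChK}, where the triviality of such an action is already established.
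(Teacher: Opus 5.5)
Your concluding step is the paper's deduction: the kernel of a $\Gamma$-action is normal, so once it contains $g$ it is all of $\Gamma$. The gap is in the input you feed into it. The paper takes from \cite{ChK} that \emph{every} action of $\SL_\infty(\Z)$ on a uniformly locally finite CAT(0) cell complex is trivial. Via Lemma 4.2 of \cite{ChK} together with Haettel or Bader--Furman, the same holds for uniformly locally finite hyperbolic spaces. Then any $g\neq 1$ acts trivially, which is why Theorem~\ref{gen} allows arbitrary $g$, and no ``degenerate'' choice is needed.

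You never isolate uniform local finiteness, and without it the conclusion is false for classes you list:
\begin{itemize}
\item Any nontrivial group acts nontrivially on a tree: take the star with one edge for each element of $\Gamma$, acted on by left multiplication with the centre fixed. It also acts nontrivially on $\ell^2(\Gamma)$ by the regular representation.
\item $\SL_\infty(\Z)=\bigcup_n\SL_n(\Z)$ is a strictly increasing union, so it acts without a fixed point on the coset tree of this chain.
\item Through $\SL_\infty(\Z/2)=\bigcup_n\SL_n(\Z/2)$ it acts nontrivially on a locally finite tree of unbounded valence, as the paper remarks.
\end{itemize}
So finite-dimensional CAT(0) spaces and Hilbert spaces are not ``reasonable'' in the required sense. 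The statement you plan to import (``a global fixed point, or $g$ maps trivially'') is too weak, as you yourself observe.

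Your class-by-class repair does not close the gap. Distortion of $e_{12}(1)$ gives at most that it is elliptic, i.e.\ has a fixed point, not that it acts trivially. Knowing that the normal subgroups of $\SL_\infty(\Z)$ are the congruence kernels $\SL_\infty(\Z,m)$ does not upgrade this. The quotients $\SL_\infty(\Z/m)$ are infinite locally finite groups that do act nontrivially, for instance on trees as above. Moreover, a fixed $g\neq 1$ lies in $\SL_\infty(\Z,m)$ for only finitely many $m$. So one really has to show that all of $\SL_\infty(\Z)$ acts trivially, and this is exactly where uniform local finiteness enters. Once $\SL_n(\Z)$ has a fixed point (from a fixed point theorem, for $n$ large relative to the dimension), it acts on the star of that point through a finite group of uniformly bounded order. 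The nontrivial finite quotients of $\SL_n(\Z)$, $n\ge 3$, have order tending to infinity with $n$. Hence for large $n$ the group $\SL_n(\Z)$ fixes that star pointwise, and by propagating through connectedness it acts trivially; therefore so does $\SL_\infty(\Z)$. Your first bullet is also false as stated: $\SL_n(\Z)$ acts properly, with infinite image, on its symmetric space, a finite-dimensional CAT(0) manifold. Once you replace the input by the \cite{ChK} statement above, your argument becomes the paper's.
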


We obtain Theorem~\ref{gen} as a special case of the more general result about recursively presented groups. 
\begin{thm}\label{gengen}
For any finitely generated recursively presented group $G$ and any non-trivial element $g\in G$, there exists an embedding of the group $G$ into a finitely presented property (T) group $\Gamma$ that is normally generated by $g$.
\end{thm}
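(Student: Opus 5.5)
We first use the Higman embedding theorem to place $G$ inside a finitely presented group $H_0$. We then make $H_0$ perfect and normally generated by $g$, and finally add property (T) by combining with a suitable finitely presented Kazhdan group. The whole construction should stay finitely presented throughout.

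\emph{Step 1.} By Higman's theorem, embed $G$ into a finitely presented group $H_0$. Replace $H_0$ by $H_1=H_0\times \SL_3(\Z)$, or by some finitely presented group containing both $H_0$ and a copy of $\SL_3(\Z)$. The reason is to have available a finitely presented property (T) group $K$, for instance $\SL_3(\Z)$ or $\EL_3(\Z[t])$-type groups as in~\cite{ChK}, which has an element $k$ of infinite order normally generating $K$ up to finite index issues.

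\emph{Step 2: making $g$ a normal generator.} Use the standard trick from the proof of Hall's embedding theorem (as in~\cite{ChK}). Let $g$ be non-trivial in $H_1$, and choose generators $x_1,\dots,x_n$ of $H_1$. Form an HNN extension or amalgam in which, for suitable $h$ with $[g,h]$ of infinite order, each generator $x_i$ is conjugated to a word built from $g$. One convenient way is to pass to a free product $H_1 * F$ and then amalgamate free subgroups: for two elements $a,b$ generating a free subgroup of rank $2$, every finite set of elements can be written inside the normal closure of $g$ by amalgamating free subgroups of equal rank. In the resulting group $H_2$, which is finitely presented because only finitely many relations are added and the amalgamated subgroups are finitely generated, the normal closure of $g$ contains all generators. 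The embedding $G\into H_2$ persists by the normal form theorems for amalgams and HNN extensions.

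\emph{Step 3: adding property (T).} This is the crux, and I expect it to be the main obstacle. I would try the approach of~\cite{ChK}: embed $H_2$ into a group generated by finitely many copies of a Kazhdan group, via a "boundedly generated by (T) subgroups" argument. Concretely, take $\Gamma$ to be an amalgam of $H_2$ with a large finitely presented Kazhdan group $K$ along a subgroup such that every generator of $H_2$ is conjugate into $K$. Property (T) of $\Gamma$ then follows if $\Gamma$ is generated by finitely many conjugates of $K$ whose pairwise positions satisfy a Dymara–Januszkiewicz / Ershov–Jaikin angle criterion. Alternatively, one can use the fact that a group generated by two (T) subgroups $K_1,K_2$ that commute has (T), or more generally that (T) passes from $K$ if $\Gamma$ is boundedly generated by conjugates of $K$. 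The plan is therefore to build, for each generator $x_i$ of $H_2$, an element $y_i\in K$ together with a finitely presented "swap" construction making $x_i$ commute with a copy of $K$ and equal to a product of elements of two (T) subgroups that commute. This should ensure relative (T) for each $x_i$ and hence (T) overall.

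Finally, normal generation by $g$ is inherited. $K$ is normally generated by any non-central element, since Margulis normal subgroup theorem-type results apply to an appropriately chosen $K$ (e.g.\ a simple-modulo-center lattice), and we can identify such an element with something in the normal closure of $g$ in $H_2$. The hardest point is keeping $H_2\into\Gamma$ injective while forcing the (T) relations; I would control this by making every amalgamation along malnormal free or abelian subgroups, so that Britton's lemma gives injectivity.
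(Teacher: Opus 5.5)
\textbf{There is a genuine gap in your Step 3, and the concrete mechanism you propose there cannot work.} Any amalgam $H_2 *_C K$ with $C$ proper in both factors, and any HNN extension, acts on its Bass--Serre tree without a global fixed point. Such a group therefore fails property (FA), and hence property (T) by Watatani. So the constructions for which normal forms or Britton's lemma give injectivity of $H_2\into\Gamma$ are exactly the ones ruled out as the final group. Once you instead impose extra relations (commuting copies of $K$, identifications with elements of $K$), you have no tool left to show that $G$ still embeds.

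The other criteria you list do not close this gap:
\begin{itemize}
\item Being generated by finitely many conjugates of a Kazhdan group gives nothing; $K*K$ is a counterexample.
\item Bounded generation needs relative (T) for the generating subgroups, and you have no way to produce that for elements of $H_2$.
\item The angle criterion applies to groups graded by root systems, which you never build.
\end{itemize}

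The normal-generation step is also flawed. By Margulis, the normal closure of a non-central element of $\SL_3(\Z)$ is only of finite index. For example, the normal closure of $E_{12}(m)$ lies in the level-$m$ congruence subgroup, which is proper. That finite index is exactly what has to be eliminated. Your Step 2 is moot in any case, since it is $\Gamma$, not $H_2$, that must be normally generated by $g$.

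\textbf{The missing idea is to linearize.}
\begin{enumerate}
\item \emph{Embedding.} The paper uses Higman, the lemmas of Chatterji--Kassabov and Lemma~\ref{fpcommutator} to place $G$ in a finitely presented group $\bar G$. This $\bar G$ sits inside $[\tilde G,\tilde G]$ for some finitely presented $\tilde G$. The paper then uses that $[\tilde G,\tilde G]$ embeds in $\EL_3(\Z[\tilde G])$ via the upper-left diagonal entry.
\item \emph{Property (T).} This comes for free from Ershov--Jaikin: $\EL_3(L)$ has (T) for every finitely generated ring $L$.
\item \emph{Normal generation.} This is achieved at the ring level. The Hall-type Lemma~\ref{ringembed} embeds $\Z[\tilde G]$ in a finitely generated ring $L$ in which $1-g$ generates the unit ideal. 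The normal closure of $g$ contains $E_{12}(1-g)$, and hence all of $\EL_3(L)$.
\item \emph{Finite presentability.} This is not obtained by keeping every step finitely presented. Instead the paper takes a finitely presented Kazhdan cover of $\EL_3(L)$ (Shalom). It then adds the finitely many relations of $\bar G$, plus relations writing each generator as a product of conjugates of $g$. These relations already hold in $\EL_3(L)$, so the resulting quotient still maps onto $\EL_3(L)$. Hence $\bar G$, and so $G$, still embeds, and (T) passes to the quotient.
\end{enumerate}
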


This theorem is motivated by several results about embedding of groups into simple groups. For arbitrary groups it seems unlikely that it can be embedded into a simple finitely presented group with property (T) (because there are not that many examples such infinite groups). The condition that any given $g$ non-trivial normally generates the group $\Gamma$ can be viewed as relaxation of the condition that $\Gamma$ is simple.

Notice that for $G$ any finite group, it is straightforward to embed $G$ in a finite simple group $L$. Take for instance the action of $G$ on $G \sqcup G$ (two disjoint copies of $G$), which gives an embedding of $G$ into $\Alt(G \sqcup G)$. As a consequence, any non-trivial element $g\in G$ will normally generate $L$. According to Hall (Corollary 2 in~\cite{Hall}) a similar statement holds for any countable group $G$, as it can be embedded into a finitely generated simple group $L$. In the hyperbolic group case one can even ensure that $L$ is finitely presented~\cite{BBMZ}, and in fact the Boone-Higman conjecture predicts that any group with solvable word problem should embed in a finitely presented simple group. 
The representations of the groups resulting from these constructions are not well understood and such groups are unlikely to have property (T). 

\smallskip

The starting point of the proof of Theorem~\ref{gengen} is Higman's embedding theorem~\cite{Hig}, allowing us to reduce Theorem~\ref{gengen} to the case of finitely presented groups. (For the group  $\SL_\infty(\Z)$ one can bypass Higman`s Theorem and construct such extension explicitly, see~\cite{ChK}.)

To ensure that the group $\Gamma$ has property (T), we use a result of Ershov-Jaikin~\cite{EJ} and construct $\Gamma$ as the group $\EL_3(R)$ for some finitely generated associative ring $R$. One can use~\cite{KM} to deduce that such group%
\footnote{This requires replacing $\EL_3(R)$ with $\EL_4(R)$, which is not a real issue.}
is finitely presented, provided that the ring $R$ is finitely presented. However, our construction of the ring $R$ is quite general and we do not have any control over the presentation of the ring $R$. Instead we obtain a finitely presented group $\Gamma$, as a suitable finitely presented cover of the group  $\EL_4(R)$.

\medskip
Our first steps is an analogous to the Hall embedding theorem for rings:
\begin{lemma}\label{ringembed}
Let $R$ be a finitely generated associative ring and let $r\in R$ be a nonzero element. Assume that $R$ can be embedded into some algebra $A$ over a field $F$. Then there exists an extension $L$ together with an embedding $R \into L$ such that the ideal generated by the element $r$ is the whole ring $L$.
\end{lemma}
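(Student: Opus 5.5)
The plan is to realize $R$ inside a ring of linear operators in which the image of $r$ has ``full rank''. In such a ring $r$ can be sandwiched to give the identity, and we then pass to a finitely generated subring.

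\emph{Step 1: reduce to a countable-dimensional unital algebra.} Replace $A$ by the $F$-subalgebra generated by the image of $R$. Since $R$ is finitely generated, this subalgebra is spanned over $F$ by the countably many monomials in the generators, so it has countable dimension. Let $A^+=F\oplus A$ be its unitization, with $R\subset A\subset A^+$. Let $A^+$ act on $V=A^+$ by left multiplication. Because $A^+$ has a unit, $x\mapsto(\text{left multiplication by }x)$ is an injective ring homomorphism $A^+\into \mathrm{End}_F(V)$, and $V$ has countable dimension.

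\emph{Step 2: inflate the rank of $r$.} Let $W$ be an $F$-vector space of countably infinite dimension, and set $U=V\otimes_F W$, which again has countably infinite dimension. The map $x\mapsto x\otimes 1_W$ gives an injective ring homomorphism $R\into E=\mathrm{End}_F(U)$. Write $\rho=r\otimes 1_W$ for the image of $r$. Since $r\neq 0$ acts nontrivially on $V$, there is $v\in V$ with $rv\neq0$. Then $\rho$ maps the infinite-dimensional subspace $v\otimes W$ injectively, so $\operatorname{rank}\rho$ is countably infinite, equal to $\dim U$.

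\emph{Step 3: sandwich $\rho$ to get the identity.} Choose a complement $C$ of $\ker\rho$ in $U$. Then $\rho|_C\colon C\to \rho(U)$ is an isomorphism, and both $C$ and $\rho(U)$ have dimension $\dim U$. Pick a linear isomorphism $b\colon U\to C\subset U$. Pick $a\in E$ which equals $(\rho|_C\circ b)^{-1}$ on $\rho(U)$ and is arbitrary, say zero, on a complement of $\rho(U)$. Then $a\rho b=1_U$.

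\emph{Step 4: define $L$.} Let $L\subset E$ be the subring generated by the image of $R$, together with $a$, $b$ and $1_U$. Then $L$ is finitely generated, it contains $R$, and the two-sided ideal of $L$ generated by $r$ contains $a r b=1$, hence equals $L$.

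The main point to check is Step 2. Without tensoring with $W$, the operator $r$ could have finite rank, or rank smaller than $\dim V$, and then no sandwich $arb$ could equal the identity. The countability of $\dim_F A$ is exactly what makes $\operatorname{rank}\rho=\dim U$ achievable. The remaining verifications, namely injectivity of the embeddings and the construction of $a$ and $b$, are routine linear algebra.
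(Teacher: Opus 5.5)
Your proof is correct and is essentially the paper's argument. You embed $R$ by left multiplication into $\mathrm{End}_F$ of $A\otimes_F(\text{a large space})$ so that $r$ has rank equal to the dimension, sandwich it as $a\,r\,b=1$, and take the subring generated by $R$, $a$, $b$. Your reduction to a countable-dimensional subalgebra (so that a countably infinite $W$ suffices, instead of the paper's cardinal $\mathfrak{c}>\dim_F A$) and your kernel-complement construction of $a,b$ are only cosmetic variations.

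One small caveat concerns the unitization step. If $R$ is unital, the unitization $F\oplus A$ is unnecessary: the $F$-span of the image of $R$ already has $1_R$ as a unit, so left multiplication on it is faithful. Using $F\oplus A$ anyway sends $1_R$ to a proper idempotent rather than to $1_U$. Drop that step if you want the embedding $R\into L$ to be unital.
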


\begin{remark}
If the additive group of $R$ is torsion-free, then $A = R \otimes_{\Z}\Q$ will work. The torsion free assumption can not be completely removed -- 
if $p$ is a prime number such that $p^2 .1 = 0\in R$ and $r \in R$ is such that $r = p r'$. Then in any ring $L$ containing $R$, the ideal $I$ generated by $r$ is proper. Indeed $pr=p^2r'=0$ and hence any element of $I$ when multiplied by $p$ is $0$, so cannot be $1$ unless the ring $L$ is trivial.

Note that even if the ring $R$ is finitely presented we can not guarantee that the ring $L$ is finitely presented.
\end{remark}
\begin{proof}[Proof of Lemma~\ref{ringembed}]
The point of assuming that $R$ embeds in an algebra is to work on vector spaces after tensoring over $F$ and use that if $S : W \to W$ is a linear transformation of a vector space $W$ such that $\mathrm{rank}\,(S) = \dim (W)$, there exists linear transformations $P,Q : W \to W$ such that $P \circ S \circ Q = \Id$. If the dimension is finite then $S$ is invertible and one map suffices but otherwise $W$ has a free (linearly independent) subset $B$ of cardinality $\mathfrak{c}$ such that $S(B)$ is also free. Complete $B$ in a basis $B'$ and $S(B)$ in the basis $B"$, and since all those sets have the same cardinality $\mathfrak{c}$, there is a linear transformation $Q : W \to W$ extending a bijection between $B'$ and $B$ and similarly a bijection between $S(B)$ and $B'$ extends to a linear transformation $P: W \to W$ by setting $P(v) = 0$ for $v \in B'' \setminus S(B)$. One checks that the composition $P \circ S \circ Q =\Id$ as it preserves every element of the basis $B'$.

Pick an infinite cardinal $\mathfrak{c}$ larger than the dimension of the algebra $A$ over $F$. The vector space $W = A \otimes_F F^{\mathfrak{c}}$ is naturally a free and faithful left $A$-module where the action is defined by  $a\cdot( x \otimes v) = (ax) \otimes v$. This induces an embedding $\phi$ of $A$ (and hence of $R$ as well) into ${\rm End}_F(W)$. Since the element $r$ is nonzero the linear transformation $\phi(r) : W \to W$ has rank exactly  $\mathfrak{c}$ which is the same as the dimension of the vector space $W$. Hence we can find linear transformations $P,Q\in{\rm End}(W)$ such that $P\circ \phi(r)\circ Q = \Id$. Now we can take $L$ to be the subring of ${\rm End}(W)$ generated by $\phi(R)$ and $\alpha$ and $\beta$. Since $R$ is finitely generated then so is $L$ and the ideal in $L$ generated by $\phi(r)$ is the whole ring since it contains the identity.
\end{proof}
We will apply Lemma~\ref{ringembed} to rings $R$ that are a group ring $\Z[G]$ for some group $G$. In general the group $G$ does not embed into $\EL_3(\Z[G]))$, however the commutator subgroup $[G,G]$ naturally embeds into $\EL_3(\Z[G]))$. In order to avoid this problem we need to following easy result
\begin{lemma}
\label{fpcommutator}
Any finitely presented group embeds into the commutator subgroup of a finitely presented group.
\end{lemma}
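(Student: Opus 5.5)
Given a finitely presented group $G$, the plan is to embed it into the commutator subgroup of a finitely presented group using a wreath-product or HNN-type trick that keeps finite presentability. The most direct route is $H = G \wr \Z/2$, or equivalently the semidirect product $(G\times G)\rtimes \Z/2$, where the generator $s$ of $\Z/2$ swaps the two factors. This group is finitely presented: a presentation consists of the generators of both copies of $G$, the element $s$, the relators of each copy, the commutation relations between the generators of the two copies, the relation $s^2=1$, and the relations $s x_i s^{-1} = y_i$ (where $x_i,y_i$ are the corresponding generators of the two copies).

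Next, embed $G$ diagonally-antidiagonally into $[H,H]$. For $g\in G$ consider $(g,1)$. Then $[s,(g,1)] = s(g,1)s^{-1}(g,1)^{-1} = (1,g)(g^{-1},1) = (g^{-1},g)$. So the antidiagonal subgroup $\{(g^{-1},g)\}$ lies in $[H,H]$. The map $g\mapsto (g^{-1},g)$ is not a homomorphism when $G$ is nonabelian, so instead use the subgroup it generates. Better is to note that $[H,H]$ contains all elements $(g^{-1},g)$, so it contains their products $(g^{-1},g)(h^{-1},h) = (g^{-1}h^{-1},gh)$. Combined with commutators inside $G\times 1$, this shows $[H,H]\supseteq \{(a,b): ab\in[G,G]\}$. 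In particular, the projection of $[H,H]\cap(G\times G)$ to the second factor is all of $G$, but we need an honest copy of $G$ inside.

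The cleaner fix, and the step I expect to be the only real obstacle, is to take three copies: $H=(G\times G\times G)\rtimes A_3$, or more simply $(G\times G)\rtimes \Z$ with $\Z$ acting by swapping, and use the embedding $g\mapsto (g,g^{-1})$ only after passing to a construction where it is a homomorphism. Instead I would use $H = G\times G \rtimes \langle s\rangle$ and the subgroup $\{(g,1)\}$ together with the identity $(g,1) = [\,s,(h,1)\,]\cdot(\ldots)$, which does not work directly. So I would switch to the standard alternative: let $H = G * \langle t\rangle$ modulo nothing, and embed $G$ via $g\mapsto [t,g]\,$? That is not a homomorphism either. The robust choice is $H = (G\times G\times G)\rtimes \Z/3$ with cyclic permutation $\sigma$ and the embedding $g\mapsto (g,g^{-1},1)$ composed with $\Delta$, i.e.\ use the subgroup $K=\{(a,b,c): abc\in[G,G]\}$. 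Then $K\supseteq G\times\{1\}\times\{1\}$ fails again for non-perfect $G$. Therefore the final plan is to replace $G$ first by a finitely presented group in which it sits inside the commutator subgroup via a perfect ambient: embed $G$ into $G\times P$ nothing helps; instead use $G\wr\Z/2$ and map $g\mapsto (g,g^{-1})$ restricted to $G$ abelianized issues, handled by the observation that $(g,1)\mapsto (g,1)(1,g)^{-1}\cdots$.

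Concretely I would settle on: $H=(G\times G)\rtimes \Z$ is not needed; take $H = G \wr \Z$-like HNN extension $H=\langle G, t \mid t g t^{-1} = g \text{ for } g\in G \rangle$, namely $G\times\Z$, and then the HNN extension $H' = \langle G\times G, s\mid s(g,1)s^{-1}=(g,g)\rangle$. This is finitely presented (finitely many relators over generators of $G$), both $g\mapsto(g,1)$ and $g\mapsto (g,g)$ are injective homomorphisms so the HNN extension is legitimate and $G\times G$ embeds. In $H'$, $(g,1)^{-1}s(g,1)s^{-1}\cdot$ rearranges to $(1,g) = (g,1)^{-1}\, s(g,1)s^{-1} = [\,(g,1)^{-1}, s\,]$, so $1\times G\subseteq [H',H']$, giving the required embedding of $G$.
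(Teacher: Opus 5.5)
Your final construction is correct, and it takes a genuinely different route from the paper's.

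\textbf{Why your construction works.} The HNN extension $H'=\langle G\times G,\ s \mid s(x_i,1)s^{-1}=(x_i,x_i)\rangle$ is legitimate, since $(g,1)\mapsto(g,g)$ is an isomorphism from $G\times 1$ onto the diagonal. It is finitely presented, because the relations for the finitely many generators $x_i$ of $G$ imply them on all of $G\times 1$. The group $G\times G$ embeds in $H'$ by the Higman--Neumann--Neumann theorem, which you should cite explicitly. Finally,
\[
(1,g)=(g^{-1},1)\,s(g,1)s^{-1}=[(g,1)^{-1},s],
\]
so $1\times G\le[H',H']$.

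\textbf{How the paper does it.} The paper keeps $G$ as the central vertex group of a star-shaped graph of groups. It amalgamates each generator $s$ of $G$ with a commutator of the same order in an auxiliary finitely presented group $G_s$ (a Heisenberg group when $s$ has infinite order). Then the generators of $G$, and hence $G$ itself, lie in $[\Gamma,\Gamma]$.

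\textbf{Comparison.} Your version is more uniform. It needs no case distinction on the orders of the generators and no auxiliary groups; the paper leaves the choice of $G_s$ for finite-order generators implicit. It gives an explicit presentation with one new generator and one new relation per generator of $G$ over $G\times G$. It also makes every element of the embedded copy of $G$ a single commutator. The paper's version avoids doubling $G$ and leaves $G$ untouched as a vertex group. Either suffices for the application in the proof of Theorem~\ref{gengen}.

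\textbf{For the write-up.} Keep only your final paragraph. Everything before ``Concretely I would settle on'', and the detour through $G\times\Z$, consists of abandoned and partly garbled attempts that the argument does not use.
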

\begin{proof}
Let $G=\langle \,S | R \, \rangle$ be a finitely presented group. For each generator $s \in S$ we can find a finitely presented group $G_s$ and an element $s' \in [G_s, G_s ] \subseteq G_s$ such that the cyclic groups generated by $s'$ in $G_s$ is isomorphic to the cyclic group generated by $s \in G$ (if $s$ is of infinite order we can take the $G_s$ to be the Heisenberg group).

Now, the fundamental group $\Gamma$ of the graph of groups with vertex groups $G$ and $G_s$ and edge groups isomorphic to the cyclic groups generated by each $s \in S$ is finitely presented and $G$ embeds into $\Gamma$. By construction, each generator $s$ of $G$ can be identified to an element in the commutator subgroup of $G_s$, which is inside the commutator subgroup of $\Gamma$.
\end{proof}
We now have all ingredients to finish the proof of Theorem~\ref{gengen}.
\begin{proof}[Proof of Theorem~\ref{gengen}]
According to Lemmata 3.2 and 3.4 of~\cite{ChK} and the above lemma, $G$ embeds in a finitely presented group $\bar G $ that sits as a subgroup into the commutator subgroup of a finitely presented group $\tilde G$.
Since the group $[\tilde G, \tilde G]$ embeds into $\EL_n(\Z[\tilde G])$ via $g$ in the upper left corner we have that $G \subseteq \EL_3(\Z[\tilde G])$. One can see that the normal subgroup generated by $g$ in $\EL_3(\Z[G])$ contains the elementary matrix $E_{1,2}(1-g)$ as
$$
\left(\begin{array}{ccc}
    1 & 1 & 0 \\
    0 & 1 & 0 \\
    0 & 0 & 1
\end{array}\right)
\left(\begin{array}{ccc}
    g & 0 & 0 \\
    0 & 1 & 0 \\
    0 & 0 & 1
\end{array}\right)
\left(\begin{array}{ccc}
    1 & -1 & 0 \\
    0 & 1  & 0 \\
    0 & 0  & 1
\end{array}\right)
\left(\begin{array}{ccc}
    g^{-1} & 0 & 0\\
    0      & 1 & 0 \\
    0      & 0 & 1
\end{array}\right)=
\left(\begin{array}{ccc}
    1 & 1-g & 0\\
    0 & 1   & 0\\
    0 & 0   & 1    
\end{array}\right)
$$
Now we can apply Lemma~\ref{ringembed} to $R=\Z[\tilde G]$ and $r = 1-g$ to construct a finitely generated ring $L$ containing $R$ such that the (two-sided) ideal generated by $r$ is the whole ring $L$.

By~\cite{EJ} the group $\tilde\Gamma = \EL_3(L)$ has property (T) since $L$ is finitely generated, and since $R \subset L$ we have that $\EL_3(\Z[\tilde G])$ (and therefore $G$) is embedded in $\tilde\Gamma$. By construction the normal subgroup generated by $g$ in $\tilde\Gamma$ contains $E_{1,2}(1-g)$ and hence $\tilde\Gamma$ is normally generated by $g$.

Although $\tilde\Gamma$ is finitely generated, it might not be finitely presented. To construct a finitely presented cover group $\Gamma\twoheadrightarrow\tilde\Gamma$ for Theorem~\ref{gengen}, we first use Shalom's result~\cite{Sh} to find a finitely presented cover $\Gamma_1$ with property (T). Next, since $M$ is finitely presented we can add the relations from the presentation of $M$ to obtain a quotient $\Gamma_2$ of $\Gamma_1$, which is still a cover of $\tilde\Gamma$. Now $\Gamma_2$ contains $\bar G$ and therefore $G$ as a subgroup. Finally we add finitely many relations to $\Gamma_2$ to express each generator as product of conjugates of the element $g$. That last finitely presented group $\Gamma$ has the desired properties.%
\footnote{This argument only requires the group $\tilde G$ to be finitely generated and one could get away with a weaker form of Lemma~\ref{fpcommutator}.}
\end{proof}
If we start with $G = \SL_\infty(\Z)$, the resulting group $\Gamma$ from Theorem~\ref{gengen} cannot act on any uniformly locally finite CAT(0) cell complex -- since any action of $\SL_\infty(\Z)$ on such a cell complex is trivial, which justifies Corollary~\ref{noaction}.  

The group $\SL_\infty(\Z)$ does have nontrivial actions on locally finite but not uniformly locally finite CAT(0) spaces, thus it is possible that $\Gamma$ also have nontrivial actions on such spaces, maybe even without bounded orbits.
According to Haettel in~\cite{Ha}, or Bader and Furman~\cite{BaFu}, higher rank lattices can't act non-elementarily on any hyperbolic space (that is, the action is either elliptic or parabolic). No local finiteness assumption, but it's possible that only the parabolic actions have non-local finiteness.
Combining that with Lemma 4.2 of~\cite{ChK}, we deduce that our group $\Gamma$ cannot admit a non-trivial action on a uniformly locally finite Gromov hyperbolic space. The lack of action on such a space is also a consequence of Lafforgue strong property (T)~\cite{dlS}, and the following is a natural question.
\begin{ques}
    Does $\EL_4(L)$ has Lafforgue strong property (T) (for suitable assumptions on the ring $L$) as well? What about our group $\Gamma$ of Theorem~\ref{gen}?
\end{ques}
Notice that using Kac-Moody groups Pierre-Emmanuel Caprace~\cite{Ca} provided a finitely presented group with property (T), normally generated by a copy of $\SL_3(\Z)$ and admitting a non-elementary action on a Gromov hyperbolic space. The example is as follows. Let $A$ be a generalized Cartan matrix of irreducible simply laced type, neither spherical and nor affine. Consider the ring $O = \Z[1/m]$. According to Theorem 1.1 in the paper of Ershov-Rall~\cite{ER}, if $m$ is large enough with respect to the size $d$ of the matrix $A$, then the Kac-Moody group $G_A(O)$ has property (T). The $\SL_3(\Z)$ that normally generates comes from an edge of the underlying diagram. The group $G_A(O)$ has a contracting elements according to~\cite{CF}, which according to~\cite{PSZ} produces a non-elementary action on a Gromov hyperbolic space.

\end{document}